\documentclass[12pt]{article}


\setlength{\textheight}{22cm}
\setlength{\textwidth}{15cm}
\setlength{\oddsidemargin}{-0.4cm}
\setlength{\evensidemargin}{-0.4cm}
\setlength{\topmargin}{-1cm}
\hfuzz=20pt

\newlength{\stefan}
\setlength{\stefan}{\linewidth}
\addtolength{\stefan}{1.0cm}


\usepackage{amsmath, amsthm, amsfonts, makeidx, amscd,graphicx,epsfig}
\usepackage{multind}
\DeclareMathSymbol{\subsetneq}{\mathord}{AMSb}{"26}

\newtheorem{lemma}{Lemma}[section]

\newtheorem{theorem}[lemma]{Theorem}

\newtheorem{proposition}[lemma]{Proposition}

\theoremstyle{definition}

\newtheorem{example}[lemma]{Example}

\newcommand{\lp}{\longrightarrow}

\newcommand{\lb}{\longleftarrow}

\newcommand{\mb}{\mathbb}

\newcommand{\G}{\mathcal{G}}

\newcommand{\C}{\mb{C}}

\newcommand{\N}{\mb{N}}

\newcommand{\DER}{\textup{DER}}

\renewcommand{\ker}{\mathit{ker}}

\newcommand{\trdeg}{\mathit{trdeg}}
\newcommand{\LND}{\textup{LND}}

\newcommand{\quot}{/\!\!/}
\newcommand{\codim}{\operatorname{codim}}
\newcommand{\id}{\operatorname{id}}
\newcommand{\pr}{\operatorname{pr}}
\renewcommand{\O}{\mathcal{O}}

\title{A commuting derivations theorem on UFD's}
\author{H.Derksen\footnote{Funded by NSF CAREER grant,  DMS 0349019,
Invariant Theory, Algorithms and Applications.},\\
University of Michigan, MI, USA\\
hderksen@umich.edu\\
\ \\
 A.van den Essen\\
 Radboud University, Nijmegen, The Netherlands\\
 a.vandenessen@math.ru.nl\\
 \ \\
 S.Maubach\footnote{Funded by Veni-grant of council for the
physical sciences, Netherlands Organisation for scientific research (NWO)} (corresponding author)\\
Radboud University, Nijmegen, The Netherlands\\
s.maubach@math.ru.nl\\}

\begin{document}
\maketitle

\abstract{Let $A$ be the polynomial ring over $k$ (a field of characteristic zero) in $n+1$ variables.
The commuting derivations conjecture states that $n$ commuting locally nilpotent derivations on $A$,
linearly independent over $A$, must satisfy $A^{D_1,\ldots,D_m}=k[f]$ where $f$ is a coordinate.
The conjecture can be formulated as stating that a
$(G_m)^n$-action on $k^{n+1}$ must have invariant ring $k[f]$ where $f$ is a coordinate.
In this paper we prove a statement (theorem \ref{CDH2}) where we assume less on $A$ ($A$ is a {\sc UFD} over $k$ of transcendence degree $n+1$ satisfying $A^*=k$)
and prove less ($A/(f-\alpha)$ is a polynomial ring  for all but finitely many $\alpha$).
Under certain additional conditions (the $D_i$ are linearly independent modulo $(f-\alpha)$ for each $\alpha\in k$)
we prove that $A$ is a polynomial ring itself and $f$ is a coordinate. This statement is proven even more generally by replacing
``free unipotent action of dimension $n$'' for
``$G_a^n$-action''.

We make links with the (Abhyankar-)Sataye conjecture and give a new equivalent formulation of the Sataye conjecture. }

\section{Preliminaries and introduction}

{\bf Notations:}
$k$ will denote a field of characteristic zero.
For a $k$-algebra $A$ we define $\LND(A)$ as the set of all locally nilpotent derivations, and $\DER(A)$ as the set of derivations.
We will denote by $A^{D_1,\ldots,D_m}:=\{a\in A; D_1(a)=\ldots=D_m(a)=0\}$.\\

In the paper \cite{Mau}, the following conjecture is posed:\\

\noindent
{\bf Commuting Derivations Conjecture:} Let $A:=k[X_1,\ldots,X_{n+1}]$, and let $D_1,\ldots,D_n\in LND(A)$ be commuting, linearly independent over $A$, locally nilpotent derivations. Then $A^{D_1,\ldots,D_n}=k[f]$ and $f$ is a coordinate. \\
{\bf Geometric version:} Suppose we have a $\G:=(\G_a)^n$ -action on $k^{n+1}$. Then $k[X_1,\ldots,X_{n+1}]^{\G} =k[f]$ and $f$ is a coordinate.\\

In the elegant paper \cite{ElKa05}, it is shown that this conjecture is equivalent to the following:\\

\noindent
{\bf Weak Abhyankar-Sataye Conjecture:} Let $A:=k[X_1,\ldots,X_{n+1}]$, and let $f\in A$ be such that $k(f)[X_1,\ldots,X_n]\cong_{k(f)} k(f)[Y_1,\ldots,Y_{n-1}]$.
Then $f$ is a coordinate in $A$.\\

For completeness sake, let us state\\

\noindent
{\bf Abhyankar-Sataye Conjecture:} Let $A:=k[X_1,\ldots,X_{n+1}]$, and let $f\in A$ be such that $A/(f)\cong k[Y_1,\ldots,Y_n]$.
Then $f$ is a coordinate.\\

\noindent
{\bf Sataye Conjecture:} Let $A:=k[X_1,\ldots,X_{n+1}]$, and let $f\in A$ be such that $A/(f-\alpha)\cong k[Y_1,\ldots,Y_n]$ for all $\alpha\in\C$.
Then $f$ is a coordinate.\\

In \cite{Mau}, the Commuting Derivations Conjecture is proven for $n=3$.
But there is no indication that it might be true in higher dimensions. Even more, the V\'en\'ereau polynomials (see\cite{Ven}) (or similar objects), which are
candidate counterexamples to the Abhyankar-Sataye conjecture, could very well spoil things for the Commuting Derivations Conjecture in higher dimensions.
In any case, it seems like a proof is far away.

Therefore, it seems a good idea to be a little less ambitious.
in this paper, we consider the weaker statement that $A$ is a UFD (in stead of a polynomial ring). 
It turns out that the situation can be quite different and interesting. 
Let us consider a famous example:

\begin{example} \label{Ex1} Let $A:=\C[x,y,z,t]=\C[X,Y,Z,T]/(X^2Y+X+Z^2+T^3)$ and let $D_1:=2z\frac{\partial}{\partial y}-x^2\frac{\partial}{\partial z}$ and
$D_2:=3t^2\frac{\partial}{\partial y}-x^2\frac{\partial}{\partial t}$. $A$ is a {\sc ufd} of transcendence degree 3 which is not a polynomial ring 
(see \cite{ML}, or use the fact that the commuting derivations conjecture in dimension 3 holds).
$D_1$ and $D_2$ commute, and $A^{D_1,D_2}=\C[x]$. Now $A/(x-\alpha)\cong \C[Y_1,Y_2]$ except in the case that $\alpha=0$.
Also, $D_1 \mod(x-\alpha), D_2\mod(x-\alpha)$ are independent over $A/(x-\alpha)$ if and only if $\alpha\not = 0$.
\end{example}

\section{The {\sc ufd} Commuting derivations theorem}

The following theorem is the main result of this paper.

\begin{theorem}\label{CDH2a}\label{CDH2}
Let $A$ be a {\sc ufd} over $k$ with $\trdeg_kQ(A)= n+1 (\geq 1)$, $A^*=k^*$,   and let $D_1,\ldots,D_n$ be commuting
locally nilpotent derivations (linearly independent
over $A$). Now $A^{D_1,\ldots,D_n}=k[f]$ for some $f\in A\backslash k$, and
\begin{enumerate}
\item
If $D_1\mod(f-\alpha),\ldots, D_n\mod(f-\alpha)$ are independent over $A/(f-\alpha)$, then $A/(f-\alpha)\cong \C^{[n]}$.
There are only finitely many $\alpha\in\C$ for which $D_1\mod(f-\alpha),\ldots, D_n\mod(f-\alpha)$
are dependent over $A/(f-\alpha)$.
\item
In the case that $D_1\mod(f-\alpha),\ldots, D_n\mod(f-\alpha)$ are
independent over $A/(f-\alpha)$ for each $\alpha\in k$, then
$A=k[s_1,\ldots, s_n,f]$, a polynomial ring in $n+1$ variables.
\end{enumerate}
\end{theorem}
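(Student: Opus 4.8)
The plan is to build up $A$ as a polynomial ring by analyzing the quotient map $A \to A^{D_1,\dots,D_n} = k[f]$ and the fibers $A/(f-\alpha)$ using the theory of locally nilpotent derivations and $G_a^n$-actions. First I would establish the core claim that $A^{D_1,\dots,D_n} = k[f]$ for some nonconstant $f$: since $A$ is a UFD with $A^* = k^*$ and the $D_i$ are commuting, linearly independent locally nilpotent derivations, the common kernel $R := A^{D_1,\dots,D_n}$ is a UFD (as an intersection of kernels of locally nilpotent derivations, each of which is factorially closed) with $R^* = k^*$ and $\trdeg_k Q(R) = \trdeg_k Q(A) - n = 1$; a one-dimensional affine-type UFD over $k$ with only trivial units that arises this way should be $k[f]$ for a single $f$ — this is the standard ``slice''/``rank'' argument, and one would cite or adapt the result that the kernel of a rank-$n$ locally nilpotent system on a $(n+1)$-dimensional UFD is a polynomial ring in one variable.

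For part (1), I would fix $\alpha \in k$ such that $D_1 \bmod (f-\alpha),\dots, D_n \bmod (f-\alpha)$ remain linearly independent over $\bar A := A/(f-\alpha)$. The ring $\bar A$ is finitely generated of transcendence degree $n$ over $k$; since $f - \alpha$ is a prime element (it generates a prime ideal because $A/(f-\alpha)$ is a domain — here one needs that $f-\alpha$ is irreducible, which follows since $f$ is a variable over $R = k[f]$ inside $Q(A)$ and the fiber is integral), $\bar A$ inherits being a domain, and the induced $\bar D_i$ are still commuting locally nilpotent derivations. The key point is that $\bar A$ is again a UFD with $\bar A^* = k^*$: this is where the hypothesis $A$ UFD with $A^* = k^*$ gets used a second time, via a Nagata-type or Samuel-type descent along the principal prime $(f-\alpha)$, possibly after localizing. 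Then $n$ commuting, linearly independent locally nilpotent derivations on an $(n)$-dimensional UFD with trivial units have common kernel $k$, and by an induction on $n$ (the case being exactly the inductive hypothesis of the theorem one dimension down, or by a direct application of a known structure theorem such as the one in \cite{Mau} for small $n$ combined with the general machinery) one concludes $\bar A \cong k^{[n]}$. The finiteness statement — that only finitely many $\alpha$ give dependent $\bar D_i$ — comes from noting that the $n \times (n+1)$ matrix of coefficients of the $D_i$ (with respect to a system of local coordinates, or the wedge product $D_1 \wedge \cdots \wedge D_n$ viewed as an element of $\bigwedge^n \mathrm{Der}(A)$) has a ``Jacobian''-type determinant $\Delta \in A$; dependence modulo $(f-\alpha)$ forces $\Delta \in (f-\alpha)$, and since $\Delta \ne 0$ it lies in only finitely many of the pairwise-coprime principal primes $(f-\alpha)$.

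For part (2), assume now that the $\bar D_i$ are independent for \emph{every} $\alpha \in k$, so by part (1) every fiber $A/(f-\alpha)$ is a polynomial ring $k^{[n]}$. The goal is to promote this fiberwise statement to $A = k^{[n+1]}$ with $f$ a coordinate. The approach is to produce a ``partial slice'' system: the $G_a^n$-action generated by $D_1,\dots,D_n$ restricts to each fiber $X_\alpha := \mathrm{Spec}(A/(f-\alpha)) \cong \mathbb A^n$, and the independence of the $\bar D_i$ on every fiber means the action on each $X_\alpha$ is \emph{locally trivial} with an $n$-dimensional orbit, i.e. a transitive (free) $G_a^n$-action on $\mathbb A^n$, hence $X_\alpha$ is a single orbit. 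So $X := \mathrm{Spec}(A) \to \mathbb A^1 = \mathrm{Spec}(k[f])$ is a family all of whose fibers are homogeneous $G_a^n$-spaces; such a family is a torsor, and one shows it is a trivial torsor because $\mathbb A^1$ has trivial higher cohomology (or because $G_a^n$-torsors over an affine base in characteristic zero are trivial by the vanishing of $H^1(\mathbb A^1, \mathcal O)$-type arguments / the fact that $G_a$ is a unipotent group and the base is affine with coordinate ring a PID). Triviality of the torsor gives $X \cong \mathbb A^1 \times G_a^n = \mathbb A^{n+1}$ equivariantly, so $A = k[f][s_1,\dots,s_n]$ with the $s_i$ providing coordinates transverse to the action, i.e. a polynomial ring in $n+1$ variables with $f$ a coordinate.

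The main obstacle I anticipate is the torsor-triviality step in part (2): one must rule out nontrivial $G_a^n$-torsors over $\mathbb A^1_k$ and, more subtly, verify that the scheme-theoretic fibers being $\mathbb A^n$ with free transitive action really does make $X \to \mathbb A^1$ a \emph{torsor} (not just a family of torsors fiberwise) — this requires some flatness and smoothness control on the morphism $A \to A/(f-\alpha)$ uniformly in $\alpha$, and likely a Bass-Connell-Wright / Kambayashi-type argument that a fibration by affine spaces over a one-dimensional smooth base, with a relative group action making fibers principal homogeneous, is globally trivial. A secondary obstacle is the UFD-descent in part (1): ensuring $A/(f-\alpha)$ is again a UFD with trivial units is not automatic and may force passing to a localization $A[1/g]$ for a suitable $g \in R = k[f]$, then tracking how to remove that localization at the end; the cleanest route is probably to invoke that a fixed-point-free $G_a^n$-action on a factorial affine variety descends factoriality to a general fiber (and to argue the ``general'' can be improved to ``every'' fiber precisely under the standing hypothesis of part (2)).
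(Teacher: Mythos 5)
The main gap is in your part (1): you claim that $\bar A:=A/(f-\alpha)$ is again a UFD with $\bar A^*=k^*$ ``via a Nagata-type or Samuel-type descent along the principal prime $(f-\alpha)$''. No such descent exists --- Nagata's theorem goes in the opposite direction (from $A[1/p]$ factorial to $A$ factorial), and factoriality simply does not pass to hypersurface sections: in the paper's own Example \ref{Ex1} the fiber $A/(x)\cong\C[Y,Z,T]/(Z^2+T^3)$ is not even normal. The descent is also unnecessary. The structure theorem you actually need (Proposition \ref{CDH1}, i.e.\ Proposition 3.2 of \cite{Mau}) applies to \emph{any} $k$-domain of transcendence degree $n$ carrying $n$ commuting, linearly independent locally nilpotent derivations, and directly produces slices and the conclusion $\bar A=k^{[n]}$; the UFD hypothesis on $A$ is used only to get $A^{D_1,\ldots,D_n}=k[f]$ with each $f-\alpha$ prime (Proposition \ref{CDH1b}), which makes $\bar A$ a domain --- that is all the fiber needs to be. For the same reason your proposed ``induction on the theorem one dimension down'' does not line up: the fiber carries a \emph{full-rank} system ($n$ derivations on an $n$-dimensional domain), which is a different and already-settled situation, not an instance of the theorem with $n-1$ derivations. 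Your finiteness argument via a determinant $\Delta=D_1\wedge\cdots\wedge D_n$ also needs to be made concrete, since $\DER(A)$ need not be free; the paper does this by constructing preslices $p_i$ with $D_j(p_i)=0$ for $j\neq i$ and $D_i(p_i)=q_i(f)$ of minimal degree (Lemma \ref{extra1}), so that dependence modulo $(f-\alpha)$ forces $q_i(\alpha)=0$ for some $i$ (Lemma \ref{BLA}) and only the finitely many roots of $q_1\cdots q_n$ can occur. Your $\Delta$ is exactly $\det(D_ip_j)=q_1(f)\cdots q_n(f)$ once the $p_j$ exist, but the existence of the $p_j$ is the real content.

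Your part (2) via torsors is a genuinely different route from the paper's and is essentially what the paper carries out in Section 4 (Theorem \ref{General}) for arbitrary unipotent groups: it requires an algebraically closed field, smoothness of $f$, \'etale local triviality, and the Kraft--Schwarz theorem \cite{Kraft3} that unipotent principal bundles over affine varieties are trivial --- so the obstacle you flag is real if you go that way, but it is entirely avoidable. The paper's own proof of part (2) is two lines given the tools above: by Lemma \ref{BLA} the hypothesis forces $q_i(\alpha)\neq 0$ for every $\alpha$ and every $i$, hence each $q_i\in k^*$, hence the $p_i$ are honest slices, and Proposition \ref{CDH1a} gives $A=k[s_1,\ldots,s_n,f]$ directly.
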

\noindent
{\bf Geometric Version:} {\em Let $V$ be a factorial affine surface over $k$ of
dimension $n+1$ such that $\mathcal{O}(V)^*=k^*$. Suppose there exists a
$\G:=(\G_a)^n$-action on $V$. Then $\mathcal{O}(V)^{\G}=k[f]$ and
\begin{enumerate}
\item
Suppose that the fiber $f=\alpha$ has a point with trivial stabilizer. Then the fiber $f=\alpha$ is isomorphic to $\C^n$.
There are only finitely many $\alpha$ for which $f=\alpha$ has no point with trivial stabilizer.
\item
Suppose that all fibers $f=\alpha$ have a point with trivial stabilizer. (Then, {\em all} points have trivial stabilizers.)
Then $V\cong \C^{n+1}$ and the action $\G\times V\lp V$ is a translation on the first $n$ coordinates.
\end{enumerate}}

In the last section we will prove a more general geometric statement of part 2 for unipotent groups in stead of $\G_a^n$-actions, but we will stick
with this description for the moment, as this is the most interesting case for us, and has a simpler, direct, algebraic proof.

Before we give a proof of the above theorem, let us meditate on this a bit.
The example \ref{Ex1} is a typical case of part 1 of the above theorem.
But there is a connection with the Sataye Conjecture. Let us consider the following conjecture:\\

\noindent
{\bf Modified Sataye Conjecture:} Let $A:=k[X_1,\ldots,X_{n+1}]$, and let $f\in A$ be such that
$A/(f-\alpha)\cong k[Y_1,\ldots,Y_n]$ for all $\alpha\in\C$.
Then there exist $n$ commuting locally nilpotent derivations $D_1,\ldots,D_n$ on $A$ such that
$A^{D_1,\ldots,D_n}=\C[f]$ and the $D_i$ are linearly independent modulo $(f-\alpha)$ for each  $\alpha\in\C$.\\

\begin{proposition} The Modified Sataye Conjecture is equivalent to the Sataye Conjecture.
\end{proposition}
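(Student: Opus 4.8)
The proof of the equivalence splits into the two obvious implications, and one of them is immediate.

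\medskip

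\noindent\textbf{Plan.} First I would observe that the Modified Sataye Conjecture trivially implies the Sataye Conjecture. Indeed, suppose $A=k[X_1,\ldots,X_{n+1}]$ and $f\in A$ satisfies $A/(f-\alpha)\cong k^{[n]}$ for all $\alpha\in\C$. By the Modified Sataye Conjecture there exist $n$ commuting locally nilpotent derivations $D_1,\ldots,D_n$ on $A$ with $A^{D_1,\ldots,D_n}=\C[f]$ which are linearly independent modulo $(f-\alpha)$ for every $\alpha$. In particular the $D_i$ are linearly independent over $A$ (linear independence modulo a single $(f-\alpha)$ already forces this). Now $A$ is a UFD over $k$ with $\trdeg_k Q(A)=n+1$ and $A^*=k^*$, so Theorem~\ref{CDH2} applies: part~2 of that theorem (the hypothesis ``$D_i$ independent modulo $(f-\alpha)$ for each $\alpha\in k$'' is satisfied, at least over $\C$; one reduces to $k=\C$ or argues over the algebraic closure and descends) yields that $f$ is a coordinate of $A$. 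That is exactly the conclusion of the Sataye Conjecture.

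\medskip

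\noindent\textbf{The reverse implication} is where the real content lies. Assume the Sataye Conjecture holds; we must produce the commuting derivations. So let $A=k[X_1,\ldots,X_{n+1}]$ and $f\in A$ with $A/(f-\alpha)\cong k[Y_1,\ldots,Y_n]$ for all $\alpha\in\C$. By the Sataye Conjecture, $f$ is a coordinate: there are $s_1,\ldots,s_n\in A$ with $A=k[s_1,\ldots,s_n,f]$. Then take $D_i:=\partial/\partial s_i$ for $i=1,\ldots,n$, computed in these coordinates. These are locally nilpotent, they commute, and $A^{D_1,\ldots,D_n}=k[f]$. Finally, modulo $(f-\alpha)$ the ring $A/(f-\alpha)$ is the polynomial ring $k[\bar s_1,\ldots,\bar s_n]$ and $D_i\bmod(f-\alpha)=\partial/\partial \bar s_i$, so the $D_i$ are visibly linearly independent modulo $(f-\alpha)$ for every $\alpha\in\C$ (in fact for every $\alpha\in k$). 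This gives the Modified Sataye Conjecture.

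\medskip

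\noindent\textbf{Main obstacle.} There is no serious obstacle; the only points requiring a word of care are bookkeeping. First, in the forward direction one must check that the hypotheses of Theorem~\ref{CDH2} are genuinely met --- that $A=k[X_1,\ldots,X_{n+1}]$ is a UFD with unit group $k^*$ and transcendence degree $n+1$ (standard), and that ``independent modulo $(f-\alpha)$ for all $\alpha\in\C$'' in the statement of the Modified Sataye Conjecture matches the ``for each $\alpha\in k$'' hypothesis of part~2 of the theorem; if $k\subsetneq\C$ one either works over $\C$ throughout (all four conjectures in the excerpt are stated over $\C$) or notes that independence modulo $(f-\alpha)$ for all $\alpha\in\C$ a fortiori gives it for all $\alpha\in k$. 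Second, one should record explicitly that linear independence of $D_1,\ldots,D_n$ over $A$ --- needed to invoke Theorem~\ref{CDH2} at all --- follows from their linear independence modulo any single $(f-\alpha)$. With these remarks in place the proof is a direct application of Theorem~\ref{CDH2} in one direction and of the coordinate presentation in the other.
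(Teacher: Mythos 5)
Your proof is correct and follows essentially the same route as the paper: the direction MSC $\Rightarrow$ SC is an application of Theorem~\ref{CDH2} part~2, and the direction SC $\Rightarrow$ MSC takes the coordinate system $A=k[s_1,\ldots,s_n,f]$ provided by SC and uses the partial derivatives with respect to the mates of $f$ as the required commuting derivations. Your additional remarks (that independence modulo a single $(f-\alpha)$ already gives independence over $A$, and the $k$ versus $\C$ bookkeeping) are sensible points the paper leaves implicit.
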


\begin{proof}
Let us abbreviate the conjectures by SC and MSC.
Suppose we have proven the MSC. Then for any $f$ satisfying ``$A/(f-\alpha)\cong k[Y_1,\ldots,Y_n]$ for all $\alpha\in\C$''
we can find commuting derivations as stated in the MSC.
But using theorem \ref{CDH2} part 2 we get that $f$ is a coordinate in $A$. So the SC is true in that case.

Now suppose we have proven the SC. Let $f$ satisfy the requirements of the MSC , that is,
``$A/(f-\alpha)\cong k[Y_1,\ldots,Y_n]$ for all $\alpha\in\C$''.
Since $f$ satisfies the requirements of the SC,  $f$ then must be a coordinate. So it has $n$ so-called mates:
$\C[f,f_1,\ldots,f_n]=\C[X_1,\ldots,X_{n+1}]$.
But then each of these $n+1$ polynomials $f,f_1,\ldots,f_n$
defines a locally nilpotent derivation, all of them commute, and the intersection of the last $n$ derivations
is $\C[f]$; so the MSC holds.
\end{proof}



But now it is time to stop daydreaming about big conjectures, and start doing some hard-core proofs.
Since the following proof uses the tools of the next section,
the reader is encouraged to read section 3 before reading the following proof in detail.

\begin{proof}{\em (of theorem \ref{CDH2})}
Using lemma \ref{extra1} we have $p_i\in A$ such that $D_j(p_i)=0$ if $i\not = j$, and $D_i(p_i)=q_i(f)\in \C[f]$ of lowest possible
degree. \\
Part 1:
$D_1,\ldots,D_n$ are independent over $A$, but they may become dependent modulo $(f-\alpha)$.
Let us first consider the case where they are independent modulo $(f-\alpha)$: then
$\bar{D}_1,\ldots, \bar{D}_n$ are linearly independent over $A/(f-\alpha)$. Then, by proposition \ref{CDH1}
we have that $A/(f-\alpha)\cong k^{[n]}$.

So, left to prove is that $D_1,\ldots,D_n$ can only  be linearly dependent modulo finitely many $(f-\alpha)$.
But this follows directly from lemma \ref{BLA}, as there are only finitely many zeroes in $q_1q_2\cdots q_n$.

Part 2:
Lemma \ref{BLA} tells us directly that for each $1\leq i\leq n$ and $\alpha\in k$, we have $q_i(\alpha)\not = 0$.
But this means that the $q_i\in k^*$, so the $p_i$ are in fact slices, and using \ref{CDH1a} we are done.
\end{proof}

\section{Tools}

The tools proven in this section focus on the situation of theorem \ref{CDH2} part 1, and are interesting in their own respect.

In this section, $A$ is a $k$-domain, and $\trdeg(A)=n+1 (\geq 1)$.

The following two propositions are proposition 3.2 and 3.4 in \cite{Mau}.

\begin{proposition}\label{CDH1}  Let $D_1,\ldots, D_{n+1}$ be commuting locally nilpotent
$k$-derivations on $A$ which are linearly independent over $A$. Then
\begin{enumerate}
\renewcommand{\theenumi}{(\roman{enumi})}
\item There exist $s_i$ in $A$ such that $D_is_i=\delta_{ij}$ for all $i,j$ and
\item $A=k[s_1,\ldots, s_{n+1}]$ a polynomial ring in $n+1$ variables over $k$.
\end{enumerate}
\end{proposition}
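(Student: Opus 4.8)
The plan is to prove the two assertions together by induction on $n+1$, using only the standard structure theory of locally nilpotent derivations: the Slice Theorem (if $D$ is an LND on a $k$-algebra $R$ with a slice $s$, that is $Ds=1$, then $R=R^D[s]\cong(R^D)^{[1]}$), the identity $\trdeg_kR^D=\trdeg_kR-1$ for a nonzero LND $D$ on a $k$-domain $R$, and the fact that for a field extension $L/F$ of characteristic zero $\operatorname{Der}_F(L)$ is an $L$-vector space of dimension $\trdeg_FL$. The key preliminary is the bookkeeping fact $(\star)$: for every subset $I\subseteq\{1,\ldots,n+1\}$ of size $j$, the subring $A_I$ of elements killed by all $D_i$ with $i\in I$ is a $k$-domain of transcendence degree $n+1-j$, each $D_\ell$ with $\ell\notin I$ restricts to an LND on $A_I$, and these restrictions are linearly independent over $A_I$. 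Fact $(\star)$ is itself proved by induction on $j$: if a nontrivial $A_I$-combination $E:=\sum_{\ell\notin I}a_\ell D_\ell$ vanished on $A_I$, then $E$ and the $j$ derivations $D_i$ with $i\in I$ would all lie in $\operatorname{Der}_{Q(A_I)}(Q(A))$, a $Q(A)$-vector space of dimension $(n+1)-(n+1-j)=j$; being $j+1$ in number they would be $Q(A)$-linearly dependent, and clearing denominators in such a relation contradicts the linear independence of $D_1,\ldots,D_{n+1}$ over $A$ --- and the transcendence-degree count follows because restricting the next derivation to the current kernel always yields a nonzero LND, which drops the transcendence degree by exactly one. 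Applying $(\star)$ with $I=\{1,\ldots,n+1\}$ shows that $A^{D_1,\ldots,D_{n+1}}$ has transcendence degree $0$, hence is a field $k'$ algebraic over $k$; since in characteristic zero an LND annihilates every element algebraic over the ground field (differentiate its minimal polynomial and use that $A$ is a domain), each $D_i$ is a $k'$-derivation, and after replacing $k$ by $k'$ we may assume $A^{D_1,\ldots,D_{n+1}}=k$. In the base case $n+1=1$ we then have one nonzero LND $D=D_1$ with $A^D=k$; a local slice $a$ (so $Da\neq0=D^2a$) satisfies $Da\in A^D=k^*$, hence $s:=(Da)^{-1}a$ is a slice and $A=k[s]\cong k^{[1]}$ by the Slice Theorem.

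For the inductive step, take commuting $D_1,\ldots,D_{m+1}$ ($m\ge1$), linearly independent over $A$, with $A^{D_1,\ldots,D_{m+1}}=k$, and set $B:=A^{D_{m+1}}$. By $(\star)$, $B$ is a $k$-domain of transcendence degree $m$ carrying the commuting LNDs $D_1|_B,\ldots,D_m|_B$, linearly independent over $B$, and $B^{D_1,\ldots,D_m}=A^{D_1,\ldots,D_{m+1}}=k$; so the induction hypothesis yields $t_1,\ldots,t_m\in B$ with $D_it_j=\delta_{ij}$ for $i,j\le m$ and $B=k[t_1,\ldots,t_m]\cong k^{[m]}$. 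Again by $(\star)$, $C:=A^{D_1,\ldots,D_m}$ is a $k$-domain of transcendence degree $1$ on which $D_{m+1}$ restricts to a nonzero LND with $C^{D_{m+1}}=A^{D_1,\ldots,D_{m+1}}=k$; applying the base case to $(C;D_{m+1}|_C)$ gives $s_{m+1}\in C$ with $D_{m+1}s_{m+1}=1$ and $C=k[s_{m+1}]$. Now put $s_j:=t_j$ for $j\le m$: since each $t_j$ lies in $B=\ker D_{m+1}$ and $s_{m+1}\in C$ lies in $\ker D_i$ for all $i\le m$, we obtain $D_is_j=\delta_{ij}$ for all $i,j\le m+1$, which is (i). Finally $s_{m+1}$ is a slice of $D_{m+1}$ on all of $A$, so the Slice Theorem gives $A=A^{D_{m+1}}[s_{m+1}]=B[s_{m+1}]=k[t_1,\ldots,t_m][s_{m+1}]$, which is a polynomial ring in $m+1$ variables over $k$ because $B$ is already a polynomial ring in $m$ variables; this is (ii).

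The genuinely delicate point is fact $(\star)$ --- specifically the persistence of linear independence of the derivations upon passing to a common kernel --- and that is exactly where the hypothesis ``linearly independent over $A$'' enters in an essential way, in tandem with the transcendence-degree count for modules of derivations. Once $(\star)$ is in hand the remainder is routine slice-theorem bookkeeping, so I do not anticipate any further obstacle, apart from the harmless reduction to the case where $k$ is algebraically closed in $A$.
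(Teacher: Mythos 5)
Your proof is correct, but note that the paper itself offers no argument for this proposition: it is quoted verbatim from \cite{Mau} (Propositions 3.2/3.4 there), so there is no in-text proof to compare against. What you have written is a sound, self-contained reconstruction along the standard lines: the transcendence-degree count $\dim_{Q(A)}\operatorname{Der}_{Q(A_I)}(Q(A))=\trdeg_{Q(A_I)}Q(A)$ to propagate linear independence down to the iterated kernels (your fact $(\star)$, where commutativity is used to see that each $D_\ell$ preserves $A_I$ and the independence over $A$ is used after clearing denominators), followed by the slice theorem and the drop-by-one of transcendence degree to run the induction. The one point worth flagging is the one you already flag yourself: as literally stated the proposition needs $k$ to be algebraically closed in $A$ (equivalently $A^{D_1,\ldots,D_{n+1}}=k$ rather than a finite extension $k'$); otherwise $A=k'[s_1,\ldots,s_{n+1}]$ is a polynomial ring over $k'$, not over $k$ (e.g.\ $A=\C[X]$ viewed as an $\R$-domain with $D=\partial_X$). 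This is an imprecision in the statement as transcribed, not a gap in your argument, and it is harmless in the paper's applications, where the base field is effectively $\C$. No further issues.
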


\begin{proposition}\label{CDH1b}
Let $A$ be a {\sc ufd} and let $A^*=k^*$.
Let $D_1,\ldots,D_n$ be commuting
locally nilpotent derivations, linearly independent
over $A$. Then $A^{D_1,\ldots,D_n}=k[f]$ for some $f\in A\backslash k$, and $f-\alpha$ is irreducible for each $\alpha\in\C$.
\end{proposition}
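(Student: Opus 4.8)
The plan is to first produce the invariant ring $R := A^{D_1,\dots,D_n}$ and show it has the right shape, then analyze irreducibility of the fibers $f-\alpha$. For the first part, the standard approach is to use one locally nilpotent derivation at a time. Since $D_1$ is locally nilpotent and nonzero, $R_1 := A^{D_1}$ is a factorially closed subring of $A$; because $A$ is a UFD and $A^* = k^*$, $R_1$ is again a UFD with $R_1^* = k^*$, and $\trdeg_k Q(R_1) = n$. The commuting derivations $D_2,\dots,D_n$ restrict to commuting locally nilpotent derivations on $R_1$ (commutativity is what makes $D_j$ preserve $\ker D_1$), and they remain linearly independent over $R_1$ — this independence claim is the first point that needs care, and I would argue it by the usual trick: a relation $\sum_{j\ge 2} a_j D_j = 0$ on $R_1$ lifts to a relation on $A$ modulo $D_1$, and one uses a slice-type argument or the fact that $D_1, D_2, \dots, D_n$ are independent over $A$ to derive a contradiction. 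Iterating $n$ times (formally, by induction on $n$, with the base case $n=0$ trivial and $n=1$ being the classical fact that the kernel of a single LND on a two-dimensional UFD with trivial units is $k[f]$ with $f-\alpha$ irreducible), we reach $R = A^{D_1,\dots,D_n}$, a one-dimensional UFD over $k$ with $R^* = k^*$; such a ring is $k[f]$ for some transcendental $f$ (a Dedekind-type argument: a UFD of transcendence degree one over $k$ with only trivial units is a polynomial ring in one variable).

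For the irreducibility of $f - \alpha$, the key input is that $R$ is \emph{factorially closed} in $A$: if $ab \in R$ with $a, b \in A \setminus \{0\}$, then $a, b \in R$. This follows because each $A^{D_i}$ is factorially closed in $A$ (a well-known property of kernels of LNDs: $D_i(ab) = 0$ forces, via the lowest-degree-term argument in the $D_i$-filtration, $D_i(a) = D_i(b) = 0$), and $R$ is the intersection of the $A^{D_i}$. Now suppose $f - \alpha = gh$ in $A$ with $g, h$ nonunits. Factorial closedness forces $g, h \in R = k[f]$, so $g, h$ are polynomials in $f$; but $f - \alpha$ has degree one in $f$, so one of $g, h$ is a constant, i.e. a unit, contradiction. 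Hence $f - \alpha$ is irreducible in $A$ for every $\alpha$.

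The main obstacle I anticipate is keeping the inductive descent clean — specifically, verifying at each stage that (a) $A^{D_1}$ inherits the hypotheses (UFD, trivial units, transcendence degree dropping by exactly one), and (b) the remaining derivations stay linearly independent over the smaller ring. Point (a) is essentially standard for LND kernels but the transcendence degree drop and the UFD property both deserve explicit citation or a short argument (the UFD property of $\ker D$ for $D$ an LND on a UFD is due to the fact that $\ker D = A \cap Q(A)^{D}$ and a localization trick making $D$ have a slice). Point (b) is the genuinely delicate step: one cannot simply restrict a dependence relation, one must use that a relation over $R_1$ would, together with $D_1$, contradict independence over $A$ — here invoking Proposition \ref{CDH1} in the borderline case, or a direct slice argument after inverting a suitable element of $R_1$, is the cleanest route. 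Everything else (factorial closedness, the one-variable classification) is routine once the descent is set up correctly.
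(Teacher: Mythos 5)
First, note that the paper does not prove this proposition at all: it is quoted verbatim as Proposition~3.4 of \cite{Mau} (together with Proposition~\ref{CDH1} as Proposition~3.2 there), so there is no in-paper argument to compare against. Your second half --- irreducibility of $f-\alpha$ from factorial closedness of $A^{D_1,\ldots,D_n}=\bigcap_i A^{D_i}$ plus a degree count in $k[f]$ --- is correct and complete, and is the standard argument.

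The genuine gap is in the first half, precisely at the step you dismiss as routine. The assertion ``a UFD of transcendence degree one over $k$ with only trivial units is a polynomial ring in one variable'' is \emph{false} as stated: take $E$ an elliptic curve over $\mathbb{Q}$ with $E(\mathbb{Q})=\{O\}$; then $R=\O(E\setminus\{O\})$ is a Dedekind domain with trivial class group (hence a UFD) and $R^*=\mathbb{Q}^*$, yet $R\neq \mathbb{Q}[f]$. Worse, $A=R[s]$ with $D=\partial_s$ then satisfies all hypotheses of the proposition with $n=1$, so the result genuinely needs $k$ algebraically closed (the paper's silent identification of $k$ with $\C$), and even then the conclusion is not a ``Dedekind-type'' formality: one must rule out positive genus and a disconnected boundary, which uses either that $R$ is an affine $\C$-algebra (not known a priori for an intersection of LND kernels) or, in the standard proofs, the fact that $R$ is factorially closed in the ambient UFD $A$ with $A^*=k^*$ --- i.e.\ one argues with irreducible elements of $A$ lying in $R$, not with $R$ in isolation. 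This identification $R=k[f]$ is the actual content of the cited result of \cite{Mau}/\cite{ElKa05}, so black-boxing it with a false general statement leaves the proof incomplete. Secondarily, your inductive descent leaves the key point (b) unproved: to see that $D_2,\ldots,D_n$ stay independent over $R_1=A^{D_1}$, the clean argument is that a combination $E=\sum_{j\geq 2}a_jD_j$ vanishing on $R_1$ extends to a $Q(R_1)$-derivation of $Q(A)$, and since $\trdeg_{Q(R_1)}Q(A)=1$ the space of such derivations is one-dimensional and spanned by $D_1$, so $E\in Q(A)D_1$, contradicting independence over $A$ unless all $a_j=0$; ``lifts to a relation on $A$ modulo $D_1$'' does not yet say this.
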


\begin{proposition}\label{CDH1a}
Let $A$, $D_i$, $f$ as in proposition \ref{CDH1b}.
Suppose there exist $s_1,\ldots,s_n$ such that $D_i(s_i)=1$.
Then $A=k[s_1,\ldots,s_n,f]$, a polynomial ring in $n+1$ variables.
\end{proposition}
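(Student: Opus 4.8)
The plan is to prove the statement by induction on $n$, at each step using the Slice Theorem to split off one derivation together with its slice and then invoking the same proposition in one lower dimension. It is convenient (and harmless in the intended application, where the slices arise from Lemma \ref{extra1} with $D_j(p_i)=\delta_{ij}q_i$, $q_i\in k^*$) to work with slices in the strong sense $D_j(s_i)=\delta_{ij}$; if one is handed slices satisfying only $D_i(s_i)=1$ one first normalizes them to this form, and this normalization turns out to be the only delicate point of the argument (see the last paragraph). For $n=1$ there is nothing to do: by the Slice Theorem $A=(\ker D_1)[s_1]$, and $\ker D_1=A^{D_1}=k[f]$, so $A=k[f][s_1]\cong k^{[2]}$.

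For the inductive step I would set $B:=\ker D_1$ and use the Slice Theorem in the form $A=B[s_1]$, a polynomial ring in $s_1$ over $B$, together with the Dixmier retraction $\pi_1:=\sum_{\ell\ge 0}\frac{(-s_1)^{\ell}}{\ell!}D_1^{\ell}:A\lp B$, a surjective $k$-algebra homomorphism fixing $B$ and killing $s_1$. First one checks that $B$ again meets the hypotheses of the proposition with $n$ replaced by $n-1$: from $A=B[s_1]$ and an elementary degree argument (a polynomial ring $R[x]$ is a {\sc ufd} only if $R$ is) one gets that $B$ is a {\sc ufd}, that $B^*=A^*=k^*$, and that $\trdeg_k Q(B)=n$; the restrictions $\bar D_i:=D_i|_B$ for $2\le i\le n$ are commuting locally nilpotent derivations (they preserve $B$ because they commute with $D_1$) with $B^{\bar D_2,\dots,\bar D_n}=A^{D_1,\dots,D_n}=k[f]$ and $f\in B$.

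Two points then need a short argument. Linear independence of $\bar D_2,\dots,\bar D_n$ over $B$: given a relation $\sum_{i\ge 2}b_i\bar D_i=0$ with $b_i\in B$, the derivation $E:=\sum_{i\ge 2}b_iD_i$ of $A$ vanishes on $B$, and a direct computation (using $D_1 s_1=1$) gives $E(s_1)\in B$, so $E-E(s_1)D_1$ vanishes on $B$ and on $s_1$, hence on all of $A=B[s_1]$; thus $\sum_{i\ge 2}b_iD_i=E(s_1)D_1$, which contradicts the independence of $D_1,\dots,D_n$ over $A$ unless every $b_i=0$. Existence of slices in $B$: since $D_j(s_1)=0$ for $j\ge 2$, each such $D_j$ commutes with multiplication by $s_1$ and with $D_1$, hence with $\pi_1$; therefore $\sigma_i:=\pi_1(s_i)\in B$ satisfies $\bar D_j(\sigma_i)=\pi_1(D_j(s_i))=\delta_{ij}$ for $2\le i,j\le n$. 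Applying the induction hypothesis to $(B;\bar D_2,\dots,\bar D_n;f;\sigma_2,\dots,\sigma_n)$ yields $B=k[f][\sigma_2,\dots,\sigma_n]$, and then $A=B[s_1]=k[f][\sigma_2,\dots,\sigma_n,s_1]$ is a polynomial ring in $n+1$ variables.

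The hard part --- and the place where the precise form of the hypothesis really matters --- is ensuring that the restricted derivations $\bar D_i$ still possess slices \emph{inside} $B$. This works cleanly for slices with $D_j(s_i)=\delta_{ij}$, since that is exactly what makes $\pi_1$ commute with $D_2,\dots,D_n$ and hence carry slices to slices. For slices satisfying only $D_i(s_i)=1$ one computes $\bar D_i(\pi_1(s_i))=1-(D_is_1)\,\pi_1(D_1s_i)$, so $\pi_1$ need not produce a slice, and one must first replace the given slices by ones whose off-diagonal relations vanish; I would expect this normalization step, which is where commutativity of the $D_i$ enters essentially, to be the main technical obstacle if one insists on the hypothesis exactly as written.
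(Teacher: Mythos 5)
Your induction via the slice theorem and the Dixmier retraction $\pi_1$ is exactly the route the paper intends (its own proof is the one-line remark that $A^D[s]=A$ whenever $D(s)=1$, iterated), and under the hypothesis $D_j(s_i)=\delta_{ij}$ your argument is complete and correct: the checks that $B=\ker D_1$ inherits all the hypotheses, that $\bar D_2,\dots,\bar D_n$ remain independent over $B$, and that $\pi_1$ carries slices to slices are all sound, and they supply details the paper omits.

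The concern in your last paragraph is, however, more than a technical obstacle: with the hypothesis literally as written ($D_i(s_i)=1$ only, with no condition on $D_j(s_i)$ for $j\ne i$) the proposition is false, so no normalization procedure can exist in general. Take $A=k[x,y,s]$, $n=2$, $D_1=\partial_s$, $D_2=x\partial_y+\partial_s$. These are commuting locally nilpotent derivations, linearly independent over $A$, with $A^{D_1,D_2}=k[x]$, so $f=x$; and $s_1=s_2=s$ satisfies $D_1(s_1)=D_2(s_2)=1$. Yet $k[s_1,s_2,f]=k[s,x]\subsetneq A$, and other admissible choices such as $s_2=s+x$ fail equally (the example does not contradict the spirit of the statement, since $A$ is still a polynomial ring, but it does contradict the stated conclusion $A=k[s_1,\dots,s_n,f]$). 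So the hypothesis must be read as $D_j(s_i)=\delta_{ij}$ --- which is precisely what is available in the only place the proposition is used, namely the proof of Theorem \ref{CDH2}(2), where Lemma \ref{extra1} gives $D_j(p_i)=\delta_{ij}q_i$ with $q_i\in k^*$ --- and with that reading your proof is a correct and complete version of the paper's sketch.
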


\begin{proof} This is an easy consequence of the fact that, if $D\in \LND(A)$ having an $s\in A$ such that $D(s)=1$,
then $A^D[s]=A$.
\end{proof}

Define the following abbreviation: \\

\noindent
{\bf (S1:)} Let $A$ be a {\sc ufd} and let $A^*=k^*$.
Let $D_1,\ldots,D_n$ be commuting
locally nilpotent derivations, linearly independent
over $A$.

\begin{lemma}\label{extra1}
Assume (S1).\\
(1)  Then there exist $p_i\in A$ such that $D_j(p_i)=0$ if $j\not = i$, and $D_i(p_i)\in k[f]\backslash \{0\}$.
Furthermore, $k[p_1,\ldots,p_n,f]\subseteq A$ is algebraic.\\
(2) Define $\mathcal{P}_i:=\{p_i\in  A~|~ D_j(p_i)=0$ if $i\not = j$ and $D_i(p_i)\in k[f]$\}.
then $D_i(\mathcal{P}_i)=q_i(f)k[f]$ for some nonzero polynomial $q_i$. Taking $p_i$ such that $D_i(p_i)$ is of lowest possible degree
yields $D_i(p_i)\in k q_i(f)$.
\end{lemma}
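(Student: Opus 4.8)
The plan is to construct each $p_i$ individually by a local‑slice argument carried out inside the ring of invariants of the \emph{other} derivations, and then to obtain (2) by observing that a certain image is an ideal of $k[f]$. By Proposition \ref{CDH1b} we have $A^{D_1,\dots,D_n}=k[f]$ with $f$ transcendental over $k$ (this invariant ring has transcendence degree one over $k$). Fix $i$ and put $B_i:=\bigcap_{j\neq i}\ker D_j$. Since the $D_j$ commute, $D_i$ maps $B_i$ into itself, so $D_i|_{B_i}$ is a locally nilpotent derivation of the domain $B_i$ whose kernel is $B_i\cap\ker D_i=A^{D_1,\dots,D_n}=k[f]$. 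I first check that $D_i|_{B_i}\neq 0$: passing from $A$ to $B_i$ amounts to intersecting $n-1$ kernels of locally nilpotent derivations (restricted successively, which is legitimate because the $D_j$ commute), and each such step lowers transcendence degree by at most one, so $\trdeg_k B_i\geq (n+1)-(n-1)=2>1=\trdeg_k k[f]$, which is impossible if $D_i|_{B_i}$ vanishes. Now choose $a\in B_i$ with $D_i(a)\neq 0$, let $m\geq 2$ be minimal with $D_i^{m}(a)=0$, and set $p_i:=D_i^{m-2}(a)$. Then $p_i\in B_i$, so $D_j(p_i)=0$ for $j\neq i$, while $D_i(p_i)=D_i^{m-1}(a)$ is a nonzero element of $B_i$ annihilated by $D_i$, hence lies in $B_i\cap\ker D_i=k[f]\backslash\{0\}$. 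This gives the existence statement in (1).

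For the remaining claim in (1), suppose $p_1,\dots,p_n,f$ were algebraically dependent over $k$, and let $P\in k[X_1,\dots,X_{n+1}]$ be a nonzero polynomial of least total degree with $P(p_1,\dots,p_n,f)=0$. Applying $D_i$ and using $D_i(p_\ell)=0$ for $\ell\neq i$ together with $D_i(f)=0$ gives $\frac{\partial P}{\partial X_i}(p_1,\dots,p_n,f)\cdot D_i(p_i)=0$; since $A$ is a domain and $D_i(p_i)\neq 0$, the polynomial $\frac{\partial P}{\partial X_i}$ also vanishes at $(p_1,\dots,p_n,f)$, so by minimality of $\deg P$ it is identically zero. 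Letting $i$ run over $1,\dots,n$ and using that $k$ has characteristic zero forces $P\in k[X_{n+1}]$, and then $P(f)=0$ with $f$ transcendental yields $P=0$, a contradiction. Hence $p_1,\dots,p_n,f$ are algebraically independent over $k$, and since $\trdeg_k Q(A)=n+1$ the ring $A$ is algebraic over $k[p_1,\dots,p_n,f]$.

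For (2), observe that $\mathcal P_i$ is closed under addition and, because $D_j(f)=0$ for every $j$ and $D_i$ is a derivation, under multiplication by $k[f]$; thus $\mathcal P_i$ is a $k[f]$-submodule of $A$ and $D_i\colon\mathcal P_i\to k[f]$ is $k[f]$-linear. Its image $D_i(\mathcal P_i)$ is therefore an ideal of the principal ideal domain $k[f]$, say $D_i(\mathcal P_i)=q_i(f)\,k[f]$, and it is nonzero by (1), so $q_i\neq 0$. If $p_i\in\mathcal P_i$ is chosen so that $D_i(p_i)$ has least degree among the nonzero values of $D_i$ on $\mathcal P_i$, then $D_i(p_i)$ is a minimal‑degree nonzero element of $q_i(f)k[f]$, hence a $k^{*}$-multiple of $q_i(f)$; so $D_i(p_i)\in k\,q_i(f)$ (and after rescaling $q_i$ one may take $D_i(p_i)=q_i(f)$).

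The step I expect to require the most care is the verification that $D_i$ restricts to a \emph{nonzero} derivation of $B_i$ --- equivalently, that dropping $D_i$ from the family strictly enlarges the common kernel. Above this was reduced to the inequality $\trdeg_k B_i\geq 2$, which rests on the standard fact that the kernel of a nonzero locally nilpotent derivation of a characteristic‑zero domain has transcendence degree exactly one less than that domain (proved via localization at a local slice, and applied $n-1$ times here). The remaining ingredients --- the local‑slice construction of $p_i$, the differentiation argument for algebraic independence, and the ideal computation in (2) --- are routine.
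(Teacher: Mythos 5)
Your proposal is correct and follows essentially the same route as the paper: construct $p_i$ as a local slice $D_i^{m-2}(a)$ inside $A_i=\bigcap_{j\neq i}\ker D_j$, and for (2) exploit that the possible values $D_i(\mathcal P_i)$ form an ideal of $k[f]$ (the paper phrases this as an explicit Euclidean division, which is the same computation). You also supply the details the paper dismisses as ``easy'' --- the transcendence-degree argument showing $A_i\neq k[f]$ and the differentiation argument for algebraic independence --- and these are sound.
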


\begin{proof}
(1) We assume that all $n$ derivations commute, so $D_i(A^{D_j})\subseteq A^{D_j}$. and therefore $D_i$ sends
$A_i:=A^{D_1,\ldots,D_{i-1},D_{i+1},\ldots,D_n}$ to itself. Taking some $a\in A_i\backslash \C[f]$ nonzero,
we use the fact that $D_i$ is locally
nilpotent to find the lowest $m\in \N$ such that $D^m(a)=0$. Now define $p_i:=D^{m-2}(a)$ (indeed $m\geq 2$).
The rest is easy.\\
(2) Take $p_i$ such that $D_i(p_i)=q_i(f)\not = 0$ has lowest possible degree. Let $\tilde{p}_i\in \mathcal{P}_i$.
then $D_i(\tilde{p}_i)=h_i(f)q_i(f)+r_i(f)$ where $deg(r_i)<deg(q_i)$. Now $D_i(\tilde{p}_i-h_i(f)p_i)=r_i(f)$ so
$r_i=0$. So $D_i(\tilde{p}_i)\in q_i(f)\C[f]$.
\end{proof}

\begin{lemma}\label{BLA}
Assume (S1). Choose $p_i$  such that $D_i(p_i)=q_i(f)$ as in lemma \ref{extra1}, where $q_i$ is of lowest possible degree.
The $D_i$ are linearly dependent modulo $f-\alpha$ if and only if $q_i(\alpha)=0$ for some $i$.
\end{lemma}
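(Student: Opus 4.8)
The plan is to prove the two implications separately; the forward one (some $q_i(\alpha)=0$ $\Rightarrow$ the $\bar D_i$ are linearly dependent modulo $f-\alpha$) is the substantial half and I would attack it by contraposition. First the setup and the elementary direction: since $D_i(f-\alpha)=D_i(f)=0$, each $D_i$ descends to a locally nilpotent derivation $\bar D_i$ on $\bar A:=A/(f-\alpha)A$, which is a domain because $f-\alpha$ is irreducible (Proposition~\ref{CDH1b}); the $\bar D_i$ still commute, and reducing the identities $D_i(p_j)=\delta_{ij}q_i(f)$ modulo $f-\alpha$ gives $\bar D_i(\bar p_j)=\delta_{ij}q_i(\alpha)$. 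So if $q_i(\alpha)\neq 0$ for all $i$ and $\sum_i c_i\bar D_i=0$ with $c_i\in\bar A$, then evaluating on $\bar p_j$ gives $c_jq_j(\alpha)=0$, hence $c_j=0$; the $\bar D_i$ are independent over $\bar A$, which is exactly the assertion that dependence forces some $q_i(\alpha)=0$.

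For the converse I would assume the $\bar D_i$ are linearly independent over $\bar A$ and show that then no $q_i(\alpha)$ can vanish. Independence of the $\bar D_i$ forces $\trdeg_k\bar A\geq n$, while $\trdeg_k\bar A\leq n$ since $f-\alpha\neq 0$; hence $\bar A$ is a $k$-domain of transcendence degree $n$ carrying $n$ commuting, linearly independent, locally nilpotent derivations, so Proposition~\ref{CDH1} applies and yields $\bar s_1,\dots,\bar s_n\in\bar A$ with $\bar D_i(\bar s_j)=\delta_{ij}$ and $\bar A=k[\bar s_1,\dots,\bar s_n]$. Writing $\bar p_i=P_i(\bar s_1,\dots,\bar s_n)$, the relations $\bar D_j(\bar p_i)=0$ ($j\neq i$) force $\partial P_i/\partial\bar s_j=0$, and $\bar D_i(\bar p_i)=q_i(\alpha)\in k$ forces $\partial P_i/\partial\bar s_i=q_i(\alpha)$; thus $\bar p_i=q_i(\alpha)\bar s_i+c_i$ for some $c_i\in k$.

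To finish, suppose $q_i(\alpha)=0$ for some $i$. Then $\bar p_i$ equals the constant $c_i$, i.e. $p_i-c_i=(f-\alpha)a$ for some $a\in A$. Applying $D_j$ ($j\neq i$) to this identity and using $D_j(f)=D_j(p_i)=0$ gives $D_j(a)=0$, and applying $D_i$ gives $(f-\alpha)D_i(a)=q_i(f)$. Since $D_i(a)\in A$ and $k[f]=A^{D_1,\dots,D_n}$ is factorially closed in $A$ (so $A\cap k(f)=k[f]$), the element $q_i(f)/(f-\alpha)$ is a nonzero polynomial, of degree $\deg q_i-1$. Hence $a\in\mathcal P_i$ and $D_i(a)$ is a nonzero element of $D_i(\mathcal P_i)$ of degree strictly less than $\deg q_i$ — contradicting Lemma~\ref{extra1}(2), which says $D_i(\mathcal P_i)=q_i(f)k[f]$. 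So every $q_i(\alpha)\neq 0$, and the lemma follows.

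The main obstacle is the converse: the nonobvious move is to pass to the quotient $\bar A$, invoke the commuting-slices result (Proposition~\ref{CDH1}) there, and read off that $\bar p_i$ is affine-linear in a single slice $\bar s_i$; after that the minimality packaged in Lemma~\ref{extra1}(2) closes the argument. The auxiliary facts — that $\bar A$ is a domain of transcendence degree $n$, and that $A^{D_1,\dots,D_n}$ is factorially closed in $A$ — are routine.
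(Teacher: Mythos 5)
Your proof is correct and follows essentially the same route as the paper: both directions hinge on applying a dependence relation to the $\bar p_j$, and the converse invokes Proposition~\ref{CDH1} on $\bar A=A/(f-\alpha)$ to force $\bar p_i\in k$ when $q_i(\alpha)=0$, then lifts $p_i-c_i=(f-\alpha)a$ and contradicts the minimality of $q_i$ via Lemma~\ref{extra1}(2). The only cosmetic differences are that you phrase both implications contrapositively and derive the explicit form $\bar p_i=q_i(\alpha)\bar s_i+c_i$ instead of just noting that $\bar A^{\bar D_1,\ldots,\bar D_n}=k$.
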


\begin{proof}
($\Rightarrow$):
Write ``$\overline{\textup{bars}}$'' for ``modulo $f-\alpha$''.
Suppose that $0\not = D:=g_1D_1+\ldots+g_nD_n$ satisfies $\overline{D}=0$ where $g_i\in A$, and not all $\overline{g}_i=\bar{0}$.
Now $\overline{g}_i\overline{D}_i(\overline{p}_i)=\overline{D}(\overline{p}_i)=\bar{0}$ for each $i$,
so for each $i$, either $\overline{g}_i=\bar{0}$ or $\overline{q_i(f)}=\bar{0}$ (as $f-\alpha$ is irreducible by proposition \ref{CDH1b}).
Since not all $\overline{g}_i=\bar{0}$, at least one $\overline{q_i(f)}=\bar{0}$. Since $f-\alpha$ is irreducible for each $\alpha$,
we not only have $(f-\alpha) | q_i(f)$, but even $(X-\alpha)|q_i(X)$, so $q_i(\alpha)=0$.\\
($\Leftarrow$): Assume $f-\alpha$ divides $q_i(f)$. We need to show that the
$D_i\mod(f-\alpha)$ are linearly dependent over $A/(f-\alpha)$.
Suppose the $\overline{D}_i$ are linearly independent over $\overline{A}$. Then we have $n$ commuting, linearly independent LNDs on
a domain of transcendence degree $n$, so we can use proposition \ref{CDH1}  and conclude that
$\overline{A}^{\overline{D}_1,\ldots,\overline{D}_n}=k$. This means, since $\overline{q_i(f)}=0$, that $\overline{p}_i\in k$.
So, $p_i=(f-\alpha)a+\lambda$ where $a\in A, \lambda\in k$. Now taking $a\in A$ we still have $D_j(a)=0$ for all $j\not = i$,
and $D_i(a)=q_i(f)(f-\alpha)^{-1}\in \C[f]$. This contradicts the assumption that $q_i$ was minimal, so our assumption that
the $\overline{D}_i$ are linearly independent was incorrect.
\end{proof}

Now we want to point out the following phenomenon:

\begin{example}
Let $D_1=Z\partial_X+\partial_Y, D_2=\partial_Y$ on $A=\C[X,Y,Z]$.
Now $A^{D_1,D_2}=\C[Z]$.
The $D_1,D_2$ are linearly independent modulo $Z-\alpha$ as long as $\alpha\not =0$.
But it is clear that a different set of derivations, namely $E_1=\partial_X, E_2=\partial_Y$
commute, their $\C[Z]$-span contains $D_1,D_2$ and the $E_i$ are linearly independent for more fibers $f-\alpha$.
\end{example}

The $E_i$ of the example are an improvement over the $D_i$: all the same properties, but they are linearly independent for more $f-\alpha$.
Perhaps for your given space $A$ and derivations $D_i$ it is impossible to find $E_i$ such that the $E_i$ are independent modulo every $f-\alpha$,
giving more information on your ring $A$. Before we elaborate on this, let us give a lemma that enables construction of the $E_i$:

\begin{lemma} \label{CDH3} Assume (S1).
Define $\mathcal{M}:= k(f)D_1+\ldots+k(f)D_n \cap \DER(A)$.
Then $\mathcal{M}=k[f]E_1\oplus \ldots \oplus k[f]E_n$ for some $E_i\in \mathcal{M}$, and the
$E_i$ have all the properties that the $D_i$ have (i.e. commuting locally nilpotent, linearly independent over $A$).
Furthermore, if the $D_i$ are linearly independent modulo $(f-\alpha)$, then the $E_i$ are too (but not necessary the other way around).
\end{lemma}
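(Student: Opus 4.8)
The plan is to construct the module $\mathcal{M}$ explicitly and then recognize it as a finitely generated torsion-free module over the principal ideal domain $k[f]$. First I would observe that $\mathcal{M}$ is a $k[f]$-submodule of $\DER(A)$: indeed $\DER(A)$ is an $A$-module, hence a $k[f]$-module since $k[f]\subseteq A$, and $\mathcal{M}$ is by definition the intersection of the $k(f)$-vector space $\sum k(f)D_i$ with $\DER(A)$, so it is closed under addition and under multiplication by $k[f]$. It sits inside the $n$-dimensional $k(f)$-vector space $V:=\sum_i k(f)D_i$, so it is torsion-free of $k[f]$-rank at most $n$; and since it contains $D_1,\ldots,D_n$ which are $k(f)$-independent (they are already $A$-independent, and $k(f)\subseteq Q(A)$), the rank is exactly $n$. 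To see that $\mathcal{M}$ is finitely generated over $k[f]$, I would note that for each $i$ the derivation $D_i$ maps $A$ into $A$, and any element $E=\sum_i (a_i/b)D_i$ of $\mathcal{M}$ (with $a_i,b\in k[f]$) must send each of the elements $p_j$ from Lemma \ref{extra1} into $A$; applying $E$ to $p_j$ gives $(a_j/b)q_j(f)\in A\cap k(f)=k[f]$, which forces $b\mid a_jq_j(f)$ in $k[f]$ for all $j$. Hence $b$ divides $\mathrm{lcm}$-type bounded quantities, so $\mathcal{M}$ is contained in $\tfrac{1}{q}(k[f]D_1+\cdots+k[f]D_n)$ for a fixed $q=q_1\cdots q_n\in k[f]$, a finitely generated $k[f]$-module; a submodule of a finitely generated module over the Noetherian ring $k[f]$ is finitely generated.

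Now by the structure theorem for finitely generated modules over a PID, $\mathcal{M}\cong k[f]^n$, i.e. $\mathcal{M}=k[f]E_1\oplus\cdots\oplus k[f]E_n$ for some $E_i\in\mathcal{M}$. It remains to check the $E_i$ inherit the good properties. Each $E_i$ is an $A$-linear (indeed $k(f)$-linear) combination of the $D_j$, so it is a $k$-derivation of $A$ (this uses that $E_i$ lands in $\DER(A)$, which is built into the definition of $\mathcal{M}$). Commutativity: for $E=\sum g_jD_j$ and $E'=\sum h_\ell D_\ell$ with $g_j,h_\ell\in k(f)\subseteq A^{D_1,\ldots,D_n}$, since the $D_j$ commute and kill the $g_j,h_\ell$, a direct bracket computation gives $[E,E']=0$; in particular the $E_i$ pairwise commute. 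Linear independence over $A$: the $E_i$ form a $k[f]$-basis of $\mathcal{M}$, and since $\mathcal{M}\otimes_{k[f]}k(f)=V$ has dimension $n$, the $E_i$ are $k(f)$-independent, hence a fortiori $Q(A)$-independent and so $A$-independent.

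The one genuinely substantive point — and I expect it to be the main obstacle — is showing the $E_i$ are \emph{locally nilpotent}. Linear combinations of commuting LNDs with coefficients in the common kernel need not be locally nilpotent in general over an arbitrary ring, so one has to use structure here. The clean argument: write $E_i=\sum_j g_{ij}D_j$ with $g_{ij}\in k(f)$, but in fact $g_{ij}\in k[f]$ after clearing — no, the $g_{ij}$ genuinely lie in $k(f)$ in general. The right approach is to note $B:=A^{D_1,\ldots,D_n}=k[f]$, localize at a suitable $g\in k[f]$ so that the derivations acquire slices over $A_g$ (using Proposition \ref{CDH1}-type slice existence after inverting enough), realize $A_g=(B_g)[s_1,\ldots,s_n]$ with $D_j=\partial_{s_j}$, so that $E_i=\sum_j g_{ij}(f)\partial_{s_j}$ is manifestly locally nilpotent on $A_g$; then local nilpotence of $E_i$ on $A$ follows because $A\subseteq A_g$ and, for $a\in A$, iterating $E_i$ stays in $A$ (as $E_i\in\DER(A)$) and becomes $0$ in $A_g$, hence is $0$ in $A$ since $A\hookrightarrow A_g$ is injective ($g$ is a nonzerodivisor). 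Finally, the monotonicity claim: if the $D_j$ are independent modulo $(f-\alpha)$ then some $D_j$ — hence its image in $\mathcal{M}/(f-\alpha)\mathcal{M}$ — is part of a basis, and since $D_1,\ldots,D_n\in\mathcal{M}$, their images modulo $(f-\alpha)$ lie in the $\bar k$-span of $\bar E_1,\ldots,\bar E_n$; as the $\bar D_j$ are $n$ independent vectors in that span, the $\bar E_i$ must be independent too. The reverse implication fails, exactly as the preceding example with $D_i$ versus $E_i$ shows.
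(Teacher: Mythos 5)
Your proof is correct and follows essentially the same route as the paper: both use the preslices $p_j$ from Lemma \ref{extra1} together with $A\cap k(f)=k[f]$ (via $A^*=k^*$) to embed $\mathcal{M}$ into a free $k[f]$-module of rank $n$ and conclude freeness over the PID $k[f]$. The one place you go beyond the paper is the local nilpotence of the elements of $\mathcal{M}$ --- the paper merely asserts it, whereas your argument (localize at $q_1\cdots q_n$ so the $D_j$ acquire slices and become partial derivatives, then descend from $A_g$ back to $A$) is a valid justification of that assertion.
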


\begin{proof}
Use lemma \ref{extra1} we find preslices $p_i$ and $D(p_i)=q_i(f)$ as stated there.

If $D\in \mathcal{M}$ then $D=g_1(f)D_1+\ldots+g_n(f)D_n$
where $g_i(f)\in k(f)$. Now since $D\in \DER(A)$ we have $D(p_i)\in A$. Also $D(p_i)=g_i(f)D_i(p_i)=g_i(f)q_i(f)\in k(f)$
thus $D(p_i)\in A\cap k(f)$, which equals $k[f]$ since $A^*=k^*$.

Therefore the map
$\varphi:\mathcal{M}\lp k[f]^n$  sending $D\lp (D(p_1),\ldots,D(p_n))$ is well-defined.
If $0=\varphi(g_1(f)D_1+\ldots+g_n(f)D_n)$ then $g_i(f)D_i(p_i)=0$ and therefore $g_i(f)=0$; thus $\varphi$ is injective.

Since $\varphi$ is an injective map, $\mathcal{M}$ must be a free $k[f]$-module. Note that
$\mathcal{M}$ can only have dimension $n$. Therefore we can find $E_1,\ldots, E_n$ as required.

Any derivation in $\mathcal{M}$ is locally nilpotent. Even more, any two derivations of $\mathcal{M}$ commute!
Next to that, the $E_i$ are clearly independent over $A$.
\end{proof}

Note that the $E_i$ can be constructively made, given the injective map $\varphi$ in the above proof.
This actually gives an interesting concept. Given the situation (S1), one can improve the derivations $D_i$ (by replacing them by the $E_i$) and then
they are linearly independent modulo as much as possible $f-\alpha$. For every such $\alpha$ we have that $A/(f-\alpha)$ is a polynomial ring.
The question is if the converse holds:\\

\noindent
{\bf Question:}
Assume (S1). Additionally, assume $k[f]D_1+\ldots+k[f]D_n=(k(f)D_1+\ldots+k(f)D_n)\cap \DER(A)$.
Is the set $\{\alpha\in \C~|~ D_1,\ldots,D_n$ linearly dependent modulo $(f-\alpha)\}$ equal to the set
$\{\alpha\in\C~|~A/(f-\alpha)$ is not a polynomial ring$\}$ ? (One always has $\supseteq$.)
Or, if this equality does not hold, what type of rings $A$ do have equality? \\

Note that the requirement ``$A$ UFD'' is absolutely necessary, as for a simple Danielewski surface $\C[X,Y,Z]/(X^2Y-Z^2)$ we find a LND $2Z\partial_Y+X^2\partial_Z$
which is nonzero modulo each $X-\alpha$.(But $A/(f-\alpha)$ is not always a domain in this case, even.)

\section{Unipotent actions}

The authors would like to thank prof. Kraft for pointing out the generalization of theorem \ref{CDH2} part 2, which has become the below theorem \ref{General}.

\begin{proposition}\label{prop}
If $U\times V\lp V$ is an action of a unipotent group $U$ on an affine variety $V$,
then for each $u\in U$, the map $u^*:O(V)\lp O(V)$ is an exponent of a locally nilpotent derivation.
\end{proposition}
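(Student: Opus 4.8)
\textbf{Proof proposal for Proposition \ref{prop}.}

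The plan is to reduce the statement to a purely linear-algebraic fact about unipotent groups and then transport it along the representation of $U$ on a suitable finite-dimensional subspace of $\O(V)$. First I would fix $u \in U$ and observe that, because $V$ is affine, $\O(V)$ is a rational $U$-module: every element lies in a finite-dimensional $U$-stable subspace on which $U$ acts by a morphism of algebraic groups $U \lp \GL(W)$. Since $U$ is unipotent, the image of $U$ in $\GL(W)$ is a unipotent subgroup, hence (after a change of basis, by the Lie--Kolchin / Engel theorem) consists of upper-triangular unipotent matrices; in particular $u^*|_W$ is a unipotent endomorphism of $W$, so $u^*|_W = \exp(N_W)$ for the nilpotent endomorphism $N_W := \log(u^*|_W) = \sum_{j\geq 1} (-1)^{j+1}(u^*|_W - \id)^j/j$, a finite sum.

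The key point is that these logarithms are compatible as $W$ ranges over all finite-dimensional $U$-stable subspaces containing a given element: if $W \subseteq W'$, then $N_{W'}$ restricts to $N_W$, because $\log$ is defined by the same universal polynomial in $(u^*-\id)$ and $W$ is $u^*$-stable. Hence the $N_W$ glue to a well-defined $k$-linear map $N : \O(V) \lp \O(V)$ which is locally nilpotent (on each $W$ it is nilpotent) and which satisfies $u^* = \exp(N)$ pointwise, again because exponentiation on each $W$ is given by a universal polynomial in $N_W$. It remains to check that $N$ is a derivation, i.e. $N(ab) = N(a)b + aN(b)$. For this I would note that $u^*$ is a $k$-algebra automorphism, so $u^*(ab) = u^*(a)u^*(b)$; writing $u^* = \exp(N)$ and comparing the expansions $\exp(N)(ab)$ and $(\exp(N)a)(\exp(N)b)$, the derivation property of $N$ on products follows from the standard fact that a locally finite algebra automorphism is the exponential of a derivation precisely when its logarithm satisfies the Leibniz rule — and here one verifies the Leibniz rule degree by degree: on the finite-dimensional $U$-stable subspace generated by $a$, $b$ and $ab$, the identity $\exp(N)(xy) = \exp(N)(x)\exp(N)(y)$ for all $x,y$ forces, by extracting the linear term, $N(xy) = N(x)y + xN(y)$.

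The main obstacle I anticipate is precisely this last step: extracting the Leibniz rule for $N$ from the multiplicativity of $u^* = \exp N$ is not completely formal, since $\exp$ is only a formal inverse to $\log$ and one must argue on a fixed finite-dimensional space where everything is polynomial. The clean way is to introduce a one-parameter family: replace $u$ by the formal flow, i.e. consider the homomorphism $t \mapsto \exp(tN)$ from the additive group to $\Aut_k(\O(V))$ (this makes sense on each $W$ as a polynomial, hence algebraic, family), show it consists of algebra automorphisms — here one does need an argument, e.g. that $\exp(tN)$ agrees with the $U$-action along the one-parameter unipotent subgroup through $u$ when such exists, or alternatively that $\{\,t : \exp(tN) \text{ is multiplicative}\,\}$ is Zariski-closed and contains $t=1$ together with enough points — and then differentiate at $t=0$ to get that $N = \frac{d}{dt}\big|_{t=0}\exp(tN)$ is a derivation. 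Everything else (rationality of $\O(V)$ as a $U$-module, unipotence of the image, convergence of the finite sums defining $\log$ and $\exp$, compatibility under $W \subseteq W'$) is routine and can be cited or dispatched in a line.
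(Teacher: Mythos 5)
Your proposal is correct and takes essentially the same route as the paper, which simply cites Proposition 2.1.3 of \cite{EssenBoek} (or asks the reader to verify directly that $u^*-\id$ is a locally nilpotent endomorphism, so that $\log(u^*)$ is a well-defined locally nilpotent derivation). The Leibniz-rule step you flag as the main obstacle does close along the lines you suggest: $\exp(nN)=(u^n)^*$ is multiplicative for every integer $n$, and for fixed $a,b$ both sides of $\exp(tN)(ab)=\exp(tN)(a)\exp(tN)(b)$ are polynomial in $t$ with values in a fixed finite-dimensional subspace, so the identity holds for all $t$ and differentiating at $t=0$ yields $N(ab)=N(a)b+aN(b)$.
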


For the proof we can refer to proposition 2.1.3 in \cite{EssenBoek}, or ask the reader to verify that $u^*-Id$ is a locally nilpotent endomorphism, and that
thus ``$log(u^*)$'' can be defined, and is a derivation.

This proposition has some immediate consequences, like that the invariants of a unipotent group action are the intersection of kernels of locally nilpotent derivations.
Since kernels of locally nilpotent derivations are factorially closed, their intersection is too, so the invariants of a unipotent group is factorially closed.

In the below theorem, $\C$ is a field of characteristic zero, which is algebraically closed.

\begin{theorem} \label{General} Let $U$ be a unipotent algebraic group of dimension $n$,
acting freely on $X$, a factorial variety of dimension $n+1$ satisfying $\O(X)^*=\C^*$.
Then $X$ is $U$-isomorphic to $U \times \C$. In particular, $X \simeq \C^{n+1}$.
\end{theorem}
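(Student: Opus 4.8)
\textbf{Proof plan for Theorem \ref{General}.}

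The plan is to reduce the general unipotent case to the abelian case $U = \G_a^n$ already handled algebraically in Theorem \ref{CDH2} part 2, using the structure theory of unipotent groups in characteristic zero. First I would recall that a unipotent group $U$ of dimension $n$ over an algebraically closed field of characteristic zero admits a filtration $U = U_0 \supset U_1 \supset \cdots \supset U_n = \{e\}$ by normal subgroups with each $U_i/U_{i+1} \cong \G_a$; in particular $U$ is (as a variety) isomorphic to $\A^n$ and is an iterated extension of additive groups. The goal is to build, one $\G_a$-factor at a time, a $U$-equivariant isomorphism $X \cong U \times \C$, where $U$ acts on itself by left translation and trivially on the $\C$-factor. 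Equivalently, I want to produce an invariant $f \in \O(X)$ with $\O(X)^U = \C[f]$ together with a $U$-equivariant retraction $X \to (f = \alpha)$ realizing $X$ as a trivial principal $U$-bundle over the affine line.

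The key steps, in order: (1) Apply Proposition \ref{prop} to see that every one-parameter subgroup of $U$, and hence the action of each $\G_a$-quotient, gives rise to locally nilpotent derivations; since $U$ acts freely, the induced infinitesimal action gives an $n$-dimensional space of commuting vector fields that are linearly independent over $\O(X)$ at every point, so $\O(X)$ is a UFD-type ring (factorially closed invariants, as noted after Proposition \ref{prop}) of transcendence degree $n+1$ with $\O(X)^* = \C^*$. (2) Use Proposition \ref{CDH1b} / the setup (S1) to get $\O(X)^U = \O(X)^{D_1,\dots,D_n} = \C[f]$ with $f - \alpha$ irreducible for all $\alpha$. (3) Freeness of the $U$-action forces the stabilizer of every point to be trivial, which translates into the hypothesis of Theorem \ref{CDH2} part 2, namely that the induced derivations are linearly independent modulo $(f-\alpha)$ for every $\alpha \in \C$: a nontrivial linear dependence modulo $(f-\alpha)$ would produce a point in the fiber with positive-dimensional stabilizer in the associated $\G_a$-action, contradicting freeness. (4) For $U = \G_a^n$ this is exactly Theorem \ref{CDH2} part 2, giving $X \cong \C^{n+1}$ with the action a translation; for general $U$, induct on the length of the filtration: quotient by the central $\G_a = U_n$ (which acts freely, so $X \to X/U_n =: Y$ is a trivial $\G_a$-bundle by the slice $s_n$ with $D_n s_n = 1$, hence $X \cong Y \times \C$), observe $Y$ is again factorial of dimension $n$ with $\O(Y)^* = \C^*$ carrying a free action of the $(n-1)$-dimensional unipotent group $U/U_n$, and conclude by induction that $Y \cong U/U_n \times \C$; reassemble to get $X \cong U \times \C$ as $U$-varieties.

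The main obstacle I expect is step (4): showing that the quotient $X \to X/U_n$ exists as a morphism of affine varieties and is a trivial $\G_a$-bundle with $\O(X/U_n) = \O(X)^{U_n}$ finitely generated and again factorial with trivial units. Finite generation of the ring of invariants of a $\G_a$-action is genuinely delicate in general (Winkelmann-type phenomena), but here the existence of a global slice $s_n$ — guaranteed because $D_n$, being part of a linearly independent commuting family that is independent modulo every $(f-\alpha)$, has a slice by Proposition \ref{CDH1a}-type reasoning — makes $\O(X) = \O(X)^{D_n}[s_n]$ a polynomial ring over the invariant ring, so $\O(X)^{U_n}$ is finitely generated, factorial (a retract of a UFD along a slice), and has unit group $\C^*$. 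With the slice in hand the bundle is manifestly trivial and $U$-equivariance of the splitting is bookkeeping with the filtration, so the induction closes. An alternative route that avoids iterating is to appeal directly to the algebraic argument of Theorem \ref{CDH2} part 2 after checking that the full $n$-dimensional family of commuting LNDs coming from a basis of $\mathrm{Lie}(U)$ satisfies (S1) and the independence-modulo-$(f-\alpha)$ hypothesis — the nonabelian bracket relations in $\mathrm{Lie}(U)$ do not obstruct this because the derivations in question still commute after passing to the associated graded, or one simply works with the subspace they span, which is all that part 2 uses.
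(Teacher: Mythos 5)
There is a genuine gap, and it sits exactly where the theorem has content beyond Theorem \ref{CDH2} part 2, namely the non-abelian case. Your plan is to induct along a central filtration and, at each stage, split off the central $\G_a\cong U_{n-1}$ (your indexing says $U_n$, but $U_n=\{e\}$) by producing a slice $s_n$ with $D_ns_n=1$. But the existence of that slice is precisely the hard point: a free $\G_a$-action on a smooth affine variety need not admit a slice (Winkelmann-type examples), so freeness alone does not give you $\O(X)=\O(X)^{D_n}[s_n]$. In the abelian case the slice is \emph{extracted} in the proof of Theorem \ref{CDH2} part 2 from Lemma \ref{extra1} and Lemma \ref{BLA}, and both of those arguments use that all $n$ derivations pairwise commute (e.g.\ $D_i(A^{D_j})\subseteq A^{D_j}$, and the identification of the joint kernel with $k[f]$). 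For non-abelian $U$ the vector fields attached to a basis of $\textup{Lie}(U)$ do not pairwise commute, so you cannot invoke ``Proposition \ref{CDH1a}-type reasoning'' or Theorem \ref{CDH2} part 2 to manufacture the slice; your fallback remark that the derivations ``still commute after passing to the associated graded, or one simply works with the subspace they span'' is not an argument --- Proposition \ref{CDH1} and Lemma \ref{extra1} genuinely need commutativity, not just a spanning subspace. Even granting the quotient step, the final ``reassembly'' of a trivial $U_{n-1}$-bundle over a trivial $U/U_{n-1}$-bundle into a trivial $U$-bundle is a cohomological splitting statement, not bookkeeping.

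The paper avoids all of this by arguing geometrically: it shows the orbits coincide with the fibers of $f$ (orbits of unipotent groups are closed), deduces that $X$ and the morphism $f:X\to\C$ are smooth, constructs an \'etale multisection $Z$ by cutting with a hyperplane transverse to the orbit directions, obtains $Z\times_\C X\cong U\times Z$, and concludes that $f$ is an \'etale-locally trivial principal $U$-bundle; triviality over the affine base then comes from the Kraft--Schwarz theorem \cite{Kraft3} that principal bundles under unipotent groups over affine varieties are trivial. Your proposal supplies no substitute for either the \'etale local triviality or the Kraft--Schwarz input, so as it stands it only reproves the $\G_a^n$ case already covered by Theorem \ref{CDH2} part 2.
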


\begin{proof}
The fact that $U$ acts free means that each $x\in X$ has trivial stabilizer: $U_x=\{u\in U;ux=x\}=\{\id\}$.
So, each orbit $Ux$ is of dimension $n$. This means that $X\quot U$ is of dimension $1$.
Also, as remarked above, $X^U$ is factorial.
But then it is also normal, and smooth. So $X\quot U$ is a smooth, rational, affine curve,
in other words, an open subvariety of $\C$. Now suppose that $X\quot U \not \cong \C$, so $X\quot U=\C-\{p_1,\ldots,p_n\}$, then $\O(X)^U=\O(\C-\{p_1,\ldots,p_n\})=\C[t,(t-p_1)^{-1},\ldots,(t-p_n)^{-1}]$. This means that $\O(X)$ contains invertible elements
$(t-p_1)^{-1}$, giving a contradiction with the assumption $\O(X)^*=\C^*$.
Hence, $X\quot U\simeq \C$, so $\O(X)^U=\O(X\quot U)=\O(\C)\cong \C[f]$ for some $f$.
Now every $f-\lambda$ ($\lambda \in \C$) is irreducible, as otherwise any irreducible factor of $f-\lambda$ would be in $\O(X)^U$ too.\\
Now consider the map $f:X\lp \C$. This is in fact the map
$X\lp X\quot U$ (as it corresponds to the map $\O(X)\lb \O(X)^U=\C[f]$) and thus surjective.
Also note that the fibers $f^{-1}(\lambda)$ are invariant under $U$: they correspond to the function space $\O(X)/(f-\lambda)$.
By assumption, $U$ acts free on each fiber of $X\lp X\quot U$, which means exactly that $U$ acts free on $f^{-1}(\lambda)$ for each $\lambda$.
Let $x\in f^{-1}(\lambda)$. Then $Ux$ is of dimension $n$ (it is just a copy of $U$). Also, each orbit of a unipotent group is closed (see Satz 4 from \cite{Kraft}),
and therefore
the inclusion $Ux\subseteq f^{-1}(\lambda)$ is an equality.
So orbits of $U$ are the same as fibers of $f$, i.e. we have an orbit fibration (or $U$-fibration).

$X_{\text{sing}}$  is closed and $U$-stable, hence a union of $U$-orbits, and so $\codim X_{\text{sing}} = 1$ or $X_{\text{sing}}$ is empty.
But
$X$ is factorial, so in particular normal, which implies
$codim(X_{sing})\geq 2$. So $X_{\text{sing}}$ is empty, in other words: $X$ is
smooth.\\

Now we claim that $f:X\lp \C$ is smooth. To see this, first note that $\O(f^{-1}(\lambda))=\O(X)/(f-\lambda)$ is reduced as $f-\lambda$ is irreducible, as seen before.
And, as we already implied, the set of functions vanishing on $f^{-1}(\lambda)$ is the ideal $(f-\lambda)$.
Now consider the tangent map $df_x \colon T_x X \to T_\lambda\C=\C$ where $x\in f^{-1}(\lambda)$.
Using ``Satz 2'', page 269 in \cite{Kraft} we see that,
$\ker df \supseteq T_xf^{-1}(\lambda)$, but since $f^{-1}(\lambda)$ is reduced, we even have equality
$\ker df = T_xf^{-1}(\lambda)$.
Now remember that the fiber $f^{-1}(\lambda)$ is an orbit, hence smooth (as any orbit is smooth!). This implies  $\dim T_xf^{-1}(\lambda)=n$ and thus $\dim \ker df=n$.
Since $\dim T_xX = n+1$ we have $\dim Im (df_x)=1$, hence $df_x$ is surjective.
A morphism between smooth varieties is smooth if and only if the differential is surjective. So we have shown that $f$ is smooth.

So: $f:X\lp \C$ is surjective, and smooth.
Let $K:=\ker df\arrowvert_x \subset T_xX$. Take some linear subspace $C$ such that $K\oplus C=T_xX$. Note that $C$ has dimension $1$.
Seeing $X$ as a subset of some $\C^{N}$, we can find hyperplanes $H$ that contains $C$. We even want $H\cap T_xX =C$,
so this means that $H\oplus T_x\subseteq \C^N$, so let us
take a hyperplane $H$ of codimension $n$ such that $H\cap T_xX=C$.
Now let $Z$ be an irreducible component of $H\cap X$ which contains $x$. Also, $dim_x H\cap X\geq 1$, thus $\dim_x Z=1$ and $Z$ is smooth at $x$.
Now $Z$ and $\C$ are smooth, and the differential of $f\|_Z:Z\lp \C$ is an isomorphism at $x$ (implying surjective), thus we  have that $f\|_Z$ is smooth at $x$.
Replacing $Z$, if necessary,  by a (special) open subset $Z'\subset Z$, we have $f\|_Z$ is \'etale.

Now look at the following diagram
$$
\begin{CD}
Z \times_\C X &@>p>> &X\\
@VV{\bar f}V && @VV{f}V \\
Z  &@>{f|_Z}>> &\C
\end{CD}
$$
where $Z \times_\C X =\{(x,z)\in X\times Z~|~ f(x)=f|_Z(z)\}$  is the (schematic) fiber product.
Since $f$ is smooth, the same holds for $\bar f$ and so $Z \times_\C X$ is smooth.
Moreover, $U$ acts on $Z \times_\C X$ by $u(z,x)=(z,ux)$ and
$p(u(x,z))=ux$ ($p$ is $U$-equivariant) and $f(u(x,z))=z=f(x,z)$ ($f$ is $U$-invariant).
The fibers of $\bar{f}$ are $\bar{f}^{-1}(z)=\{(x,z)~|~f(x)=f|_Z(z)\}=\{x~|~ f(x)=\alpha\}=f^{-1}(\alpha)$ where $\alpha=f_Z(z)$.
Now $\bar f$ has a section $\sigma \colon Z\to Z\times _\C X$ given by $z\mapsto (z,z)$, i.e.  $\bar f \circ \sigma = \id_Z$.
Therefore, we can extend the diagram above
$$
\begin{CD}
U \times Z &@>q>>& Z \times_\C X &@>p>> &X\\
@VV{\pr_Z}V && @VV{\bar f}V && @VV{f}V \\
Z &@= &Z  &@>{f|_Z}>> &\C
\end{CD}
$$
where $q \colon U \times Z \to Z \times_\C X$ is given by $(u,z) \mapsto (z, uz)$.
By construction, $q$ is bijective, hence an isomorphism, since the second variety is normal (see \cite{Kraft2} proposition 5.7).
Note that the role of $x$ was arbitrary: for each $x$ we find a neighborhood $Z$ where $Z\times_\C X=Z\times_\C U$.
This last statement exactly means that  the map $f \colon X \to \C$ is a locally trivial principal
$U$-bundle with respect to the \'etale topology:  for every point $\lambda\in\C$ there is an \'etale map $Z \to \C$ such that $\lambda$
is in the image and the fiber product $Z \times_\C X$ is a trivial $U$-bundle, i.e. isomorphic  to $U \times Z \overset{\pr_Z}{\longrightarrow} Z$.

In the paper \cite{Kraft3} we now find a result that tells us that a principal $G$-bundle where $G$ is a unipotent group is trivial over any affine variety, and then we are done.

\end{proof}

\end{document}